    \newcommand\cyr{%
    \renewcommand\rmdefault{wncyr}%
    \renewcommand\sfdefault{wncyss}%
    \renewcommand\encodingdefault{OT2}%
    \normalfont
    \selectfont}
    \DeclareTextFontCommand{\textcyr}{\cyr}
\theoremstyle{plain}
\newtheorem{theorem}[subsection]{{\bf Theorem}}
\newtheorem*{theorem*}{{\bf Theorem}}
\newtheorem{corollary}[subsection]{{\bf Corollary}}
\newtheorem*{corollary*}{{\bf Corollary}}
\newtheorem{lemma}[subsection]{{\bf Lemma}}
\theoremstyle{definition}
\theoremstyle{remark}
\newtheorem{remark}[subsection]{{\it Remark}}
\numberwithin{equation}{section}
\newcommand{\gen}[1]{\langle #1 \rangle}
\begin{document}
\baselineskip=14pt
\title{On the dimension of the Schur multiplier of nilpotent Lie algberas}
\author[P. K. Rai]{Pradeep K. Rai}
\address[Pradeep K. Rai]{Department of Mathematics, Bar-Ilan University
Ramat Gan \\
Israel}
\email{raipradeepiitb@gmail.com}
\subjclass[2010]{17B30, 17B56}
\keywords{Schur multiplier, nilpotent Lie algebra}
\begin{abstract}
We give a bound on the dimension of the Schur multiplier of a finite dimensional nilpotent Lie algebra which sharpens the earlier known bounds. 
\end{abstract}
\maketitle
\section{Introduction}
\allowbreak{

Let $L$ be a finite dimensional Lie algebra. By $\gamma_i(L)$ and $Z_i(L)$ we denote the $i$-th term in the lower and upper central series of $L$ respectively. The dimension of the vector space $L$ is denoted by $\dim(L)$. For a subset $H$ of $L$, $\gen{H}$ denotes the vector subspace generated by $H$.

\vspace{.2cm}

The analogous theory for the Schur multiplier of a Lie algebra was developed in the dessertation of Batten and it has been further investigated in many recent papers \cite{Mehedi, Batten1, Batten2, Bosko1, Bosko2, Eshrati, Hardy, Niroomand_lie, Riyahi, Salemkar1, Salemkar2, Yankosky}. In most of the cases the theory runs parellel, though there are instances where the results for the Lie algebra does not match with the results for groups. For example it is a well known fact that there can be more than one cover for a finite group, but it has been proved in \cite{Batten1} that every finite dimensional Lie algebra has a unique cover. Also we have examples of groups generated by two elements having trivial Schur multiplier, but it has been proved by Bosko and Stitzinger in \cite{Bosko1} and also by Riyahi and Salemkar in \cite{Riyahi} that if a finite dimensional nilpotent Lie algebra has dimension greater than one, then its Schur multiplier is non-zero (however note a similar result proved by Golod and Safarevic in 1964, that if a finite $p$-group is generated by at least 4 elements, then its Schur multiplier is non-zero \cite{Golod}). 
Let $L$ be a finite dimensional $c$-step nilpotent Lie algebra with $\dim(L) =n$ and $\dim(\gamma_2(L)) = m$ and $M(L)$ denotes its Schur multiplier. Batten and others proved that $\dim (M(L)) \leq \frac{1}{2}n(n-1)$ \cite{Batten2}. Improving this bound Hardy and Stitzinger proved that $\dim (M(L)) \leq \frac{1}{2}n(n-1)- m$. This result was further improved by Yankosky \cite{Yankosky}. He established the following bound 

\begin{equation}
\dim (M(L)) \leq \dim(M(L/\gamma_c(L))) + \dim(L/\gamma_c(L))\dim(\gamma_c(L)) - \dim(\gamma_c(L)) \label{eq_yankosky}
\end{equation}

\noindent and derived that 

\[\dim(M(L)) \leq \frac{1}{2}(n+m)(n-m-1) = \frac{1}{2}n(n-1)-\frac{1}{2}m(m-1).\] 

\vspace{.2cm}
Improving this bound, Niroomand and Russo 
proved that if $m \geq 1$ then $\dim(M(L) \leq \frac{1}{2}(n-m-1)(n+m-2)+1$ \cite{Niroomand_lie}. We note here that this bound of Niroomand and Russo also follows from \cite[Theorem 3.10]{Eshrati} where Eshrati and others obtained a bound for the dimension of the Schur multiplier of a nilpotent n-Lie algebra.

In the following theorem we find an analogous bound to the bound in \cite[Theorem 1.1]{Rai} improving the bound of Niroomand and Russo.

\begin{theorem} \label{thm6}
Let $L$ be a $c$-step nilpotent Lie algebra with $\dim(L) = n$ and $\dim(\gamma_2(L)) = m \geq1$. Then $$\dim(M(L)) \leq  \frac{1}{2}(n-m-1)(n+m)- \sum_{i =2}^{\min(n-m,c)} n-m-i.$$
\end{theorem}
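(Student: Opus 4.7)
The plan is induction on the nilpotency class $c$. The base case $c = 2$ is immediate: since $m \ge 1$ forces $n - m \ge 2$, the sum in the statement collapses to the single term $n-m-2$, and the right-hand side equals $\tfrac{1}{2}(n-m-1)(n+m-2) + 1$, which is exactly the Niroomand--Russo bound.

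For the inductive step I would take $N := \gamma_c(L)$ and $Q := L/N$: then $N$ is central and contained in $\gamma_2(L)$, while $Q$ is $(c-1)$-step nilpotent with $\dim Q/\gamma_2(Q) = n - m$, $\dim \gamma_2(Q) = m - d_c$ and $\dim Q = n - d_c$, where $d_c := \dim \gamma_c(L)$. The main tool will be the Ganea-type five-term exact sequence of Lie-algebra homology
\[
\bigl(Q/\gamma_2(Q)\bigr) \otimes N \;\longrightarrow\; M(L) \;\longrightarrow\; M(Q) \;\longrightarrow\; N \;\longrightarrow\; 0,
\]
valid whenever $N$ is central and contained in $\gamma_2(L)$; it yields $\dim M(L) \le \dim M(Q) + (n-m-1)d_c$. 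Applying the induction hypothesis to $Q$ (after rewriting the target as $(n-m-1)(m+1) + \binom{n-m-c}{2}$, with the convention $\binom{k}{2} = 0$ for $k \le 1$) then produces
\[
\dim M(L) \;\le\; (n-m-1)(m+1) + \binom{n-m-c+1}{2}.
\]

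When $c \ge n - m$ both binomial terms vanish and the induction closes. The delicate case is $c < n - m$, where the estimate overshoots the target by exactly $n - m - c$; closing this gap is the main obstacle. The image of the leftmost arrow in the Ganea sequence can be identified, via the Schur cover $L^{*}$ of $L$, with $\gamma_{c+1}(L^{*}) \subseteq M(L)$, so I would try to show
\[
\dim \gamma_{c+1}(L^{*}) \;\le\; (n-m)\,d_c - (n-m-c) \qquad \text{whenever } c < n - m.
\]
Because $L^{*}/\gamma_2(L^{*})$ has dimension $n-m$ and $\gamma_{c+1}(L^{*})$ is spanned by $(c+1)$-fold iterated brackets of basis elements of $L^{*}/\gamma_2(L^{*})$, antisymmetry of the bracket together with the Jacobi identity should force at least $n - m - c$ extra linear dependencies once the nilpotency class is strictly less than the number of generators. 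Making this bracket-monomial count precise --- most plausibly through a free-Lie-algebra reduction that compares $\gamma_{c+1}(L^{*})$ with the top graded piece of a class-$(c+1)$ quotient of the free Lie algebra on $n-m$ generators --- is the main technical hurdle.
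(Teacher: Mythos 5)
Your bookkeeping is correct and, modulo packaging, matches the paper's: your Ganea-type sequence for the central ideal $N=\gamma_c(L)$ is the same mechanism as the Eshrati identity (2.2) that the paper telescopes down the lower central series, your rewriting of the bound as $(n-m-1)(m+1)+\binom{n-m-c}{2}$ is right, and you have correctly isolated the one thing that separates this theorem from the Yankosky/Niroomand--Russo bounds: when $c<n-m$ one must exhibit an extra $n-m-c$ dimensions of kernel in the map $\bigl(L/\gamma_2(L)\bigr)\otimes\gamma_c(L)\to\gamma_{c+1}(L^*)$ (in the paper's notation, $\dim\ker(\lambda_c)\ge n-m-c$), and the same at each lower level. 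The base case via Niroomand--Russo is legitimate and non-circular.

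However, that isolated claim is not a ``technical hurdle'' to be smoothed over later --- it \emph{is} the theorem, and your sketch of how to get it does not work as stated. A generic dimension count in the free Lie algebra on $n-m$ generators cannot produce the estimate, because $\gamma_{c+1}(L^*)$ is a quotient of the corresponding piece of the free nilpotent algebra by relations depending on $L$, and an upper bound on a free object does not descend to a lower bound on the kernel for an arbitrary $L$; moreover, for the relations to survive in $\ker(\lambda_c)$ they must be visibly independent \emph{after} projecting to $\bigl(L/\gamma_2(L)\bigr)\otimes\gamma_c(L)$. What the paper actually does is prove a specific generalized Jacobi identity (Lemma \ref{lem_ellis}, by induction on the weight $i$) which, for a weight-$c$ bracket $[y_1,\dots,y_c]\notin\gamma_{c+1}(L)$ in chosen generators and each of the $n-m-c$ remaining generators $z_j$, yields an explicit element $\Psi_c(y_1,\dots,y_c,z_j)\in\ker(\lambda_c)$ whose ``leading term'' is $\overline{[y_1,\dots,y_c]}\otimes\overline{z_j}$; linear independence over $j$ then falls out of minimality of the generating set. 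Without this identity (or an equivalent explicit source of relations together with an independence argument), your induction does not close in the case $c<n-m$, so the proposal has a genuine gap at its central step.
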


\section{Preriquisites}
 The following lemma gives the dimension of the Schur multiplier of an abelian Lie algebra.

\begin{lemma} \cite[Lemma 3]{Batten2} \label{lem_lie}
A Lie algebra L of dimension $n$ is abelian if and only if $\dim(M(L)) = \frac{1}{2}n(n-1)$.
\end{lemma}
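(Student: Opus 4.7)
The plan is to prove each direction separately using a free presentation.

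\textbf{Forward direction.} If $L$ is abelian, take $F$ to be the free Lie algebra on $n$ generators $x_1,\ldots,x_n$ mapped to a basis of $L$; the kernel is $R = [F,F]$, and Hopf's formula gives
\[M(L) = (R \cap [F,F])/[F,R] = \gamma_2(F)/\gamma_3(F).\]
This is the degree-two homogeneous component of the free Lie algebra on $n$ generators, which has dimension $\binom{n}{2}$ (for example by the Hall basis theorem, or by noting that the free $2$-step nilpotent Lie algebra of rank $n$ is a cover of $L$).

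\textbf{Converse direction.} It suffices to show that if $m := \dim[L,L] \geq 1$, then $\dim M(L) \leq \binom{n}{2} - m < \binom{n}{2}$. Pick any free presentation $L = F/R$ and form the stem object $\mathfrak{g} := F/[F,R]$, a central extension $0 \to A \to \mathfrak{g} \to L \to 0$ with $A := R/[F,R]$ central. By Hopf's formula, $\gamma_2(\mathfrak{g}) = [F,F]/[F,R]$ and $M(L) = A \cap \gamma_2(\mathfrak{g})$, while the projection $\gamma_2(\mathfrak{g}) \twoheadrightarrow [L,L]$ has kernel $M(L)$, so $\dim\gamma_2(\mathfrak{g}) = \dim M(L) + m$.

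The crucial observation is that, since $A$ is central in $\mathfrak{g}$, the bracket on $\mathfrak{g}$ descends to a well-defined alternating bilinear map $L \times L \to \gamma_2(\mathfrak{g})$. This induces a surjection $\wedge^2 L \twoheadrightarrow \gamma_2(\mathfrak{g})$, hence $\dim\gamma_2(\mathfrak{g}) \leq \binom{n}{2}$. Combined with the identity above, this yields $\dim M(L) \leq \binom{n}{2} - m$, as desired.

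The only non-formal input is the dimension count for $\gamma_2(F)/\gamma_3(F)$ in the forward direction; everything else is a direct manipulation of Hopf's formula together with centrality of $A$ in the stem extension, so the main ``obstacle'' is really just pinning down this standard fact about free Lie algebras.
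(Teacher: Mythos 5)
Your proof is correct. Note that the paper itself offers no proof of this lemma --- it is quoted verbatim as \cite[Lemma 3]{Batten2} --- so there is no internal argument to compare against; your route (Hopf's formula with $R=\gamma_2(F)$ for the abelian direction, and the stem extension $\mathfrak{g}=F/[F,R]$ with the induced surjection $\wedge^2 L \twoheadrightarrow \gamma_2(\mathfrak{g})$ for the converse) is essentially the standard argument of the cited source. One minor ordering point: establish the surjection $\wedge^2 L \twoheadrightarrow \gamma_2(\mathfrak{g})$ \emph{before} invoking the identity $\dim\gamma_2(\mathfrak{g}) = \dim M(L) + m$, since that dimension arithmetic presupposes finite-dimensionality of $\gamma_2(\mathfrak{g})$, which is exactly what the surjection supplies.
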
 

\begin{remark}\label{rmk1}
Let $F/R$ be a free presentation of a c-step nilpotent Lie algebra $L$. Eshrati et al. established the following equation (see the proof of \cite[Theorem 3.6]{Eshrati}):

\begin{equation}
\dim(\gamma_c(L)) + \dim(M(L)) = \dim(M(L/\gamma_c(L))) + \dim\Bigg(\frac{[F, \  \gamma_c(F)+R]}{[R, F]}\Bigg). \label{eq_Eshrati}
\end{equation}

They also defined the following surjective bilinear map 

\[f: \frac{F}{S_{c-1}} \times \frac{\gamma_c(F)+R}{R} \mapsto \frac{[F, \  \gamma_c(F)+R]}{[R, F]}\]

\[(x+ S_{c-1}, f_c+R) \mapsto [x, f_c][R, F],\]

\vspace{.2cm}
where $S_{c-1}$ is given by $Z_{c-1}(L) = S_{c-1}/R$. Clearly $\gamma_2(F) + R \leq S_{c-1}$. Hence this gives an epimorphism  $\lambda_c: \frac{L}{\gamma_2(L)} \otimes \gamma_c(L) \mapsto \frac{[F, \ \gamma_c(F)+R]}{[R, F]}$. 

Similarly the maps 
\[\frac{F}{\gamma_2(F)+R} \times \frac{\gamma_{i}(F)+R}{\gamma_{i+1}(F)+R} \mapsto \frac{[F, \  \gamma_{i}(F)+R]}{[\gamma_{i+1}(F)+R, F]}\]
can be defined, giving epimorphisms 
\[\lambda_{i}: \frac{L}{\gamma_2(L)} \otimes \frac{\gamma_{i}(L)}{\gamma_{i+1}(L)} \mapsto \frac{[F, \ \gamma_{i}(F)+R]}{[\gamma_{i+1}(F)+R, F]}\]
for $2 \leq i \leq c-1$.
\end{remark}




\section{Schur multiplier of nilpotent Lie algebras}
\begin{lemma}\label{lem_ellis}
Let $L$ be a Lie algebra. Then for $x_1, x_2, \ldots, x_{i+1} \in L$,
\begin{eqnarray*}
&& [[x_1, x_2, \cdots, x_i]_l, x_{i+1}] + [[x_{i+1}, [x_1, x_2, \cdots x_{i-1}]_l], x_i] \\
 && + [[[x_i, x_{i+1}]_r, [x_1, \cdots, x_{i-2}]_l], x_{i-1}] \\
&& + [[[x_{i-1}, x_i, x_{i+1}]_r, [x_1, x_2, \cdots, x_{i-3}]_l], x_{i-2}] 
+ \cdots + [[x_2, \cdots, x_{i+1}]_r, x_1] = 0 \\
\end{eqnarray*}
for $i \geq 3$, where 
\[[x_1, x_2, \cdots x_i]_r = [x_1, [\cdots [x_{i-2},[x_{i-1},x_i]]\ldots]\]
and 
\[[x_1, x_2, \cdots x_i]_l = [\ldots[[x_1, x_2], x_3], \cdots, x_i].\]
\end{lemma}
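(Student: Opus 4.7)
The plan is to prove the identity by introducing abbreviations for the left- and right-normed brackets, rewriting the sum in a uniform way, and then showing that it telescopes under repeated Jacobi. Set $A_j := [x_1, x_2, \ldots, x_j]_l$ for $2 \leq j \leq i$, and $B_k := [x_{i+2-k}, x_{i+3-k}, \ldots, x_{i+1}]_r$ for $2 \leq k \leq i$. With these, the first term of the identity is $T_1 := [A_i, x_{i+1}]$, the second is $T_2 := [[x_{i+1}, A_{i-1}], x_i]$, the $k$-th term for $3 \le k \le i$ is $T_k := [[B_{k-1}, A_{i-k+1}], x_{i-k+2}]$, and the last term is $T_{i+1} := [B_i, x_1]$. (I am reading the enumeration of terms directly from the statement; the $k=2$ term fits the same pattern with $B_1 := x_{i+1}$ and a sign absorbed via skew-symmetry.)

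The core of the proof is the telescoping claim
\[
T_1 + T_2 + \cdots + T_k \;=\; -[B_k, A_{i-k+1}] \qquad (2 \le k \le i),
\]
which I prove by induction on $k$. The base case $k=2$ is a single Jacobi identity applied to $(A_{i-1}, x_i, x_{i+1})$:
\[
[[A_{i-1}, x_i], x_{i+1}] + [[x_i, x_{i+1}], A_{i-1}] + [[x_{i+1}, A_{i-1}], x_i] = 0,
\]
whose outer terms are $T_1$ and $T_2$ and whose middle term is $[B_2, A_{i-1}]$. For the induction step, I apply Jacobi to the triple $(B_k, A_{i-k}, x_{i-k+1})$ and use the recursive definitions $A_{i-k+1} = [A_{i-k}, x_{i-k+1}]$ and $B_{k+1} = [x_{i-k+1}, B_k]$ to rewrite
\[
[B_k, A_{i-k+1}] \;=\; [B_{k+1}, A_{i-k}] + [[B_k, A_{i-k}], x_{i-k+1}] \;=\; [B_{k+1}, A_{i-k}] + T_{k+1},
\]
which combines with the inductive hypothesis to give $T_1 + \cdots + T_{k+1} = -[B_{k+1}, A_{i-k}]$.

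Applying the claim at $k = i$ yields $T_1 + \cdots + T_i = -[B_i, A_1] = -[B_i, x_1] = -T_{i+1}$, so that the total sum vanishes, which is what the lemma asserts.

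The only genuine obstacle is bookkeeping: one must carefully match the written-out sum in the statement with the uniform expressions $T_k$, in particular handling the slightly different shapes of the $k=1$, $k=2$, and $k=i+1$ terms. Once the indexing is fixed, each individual reduction is a single application of the Jacobi identity, and the proof is almost entirely mechanical.
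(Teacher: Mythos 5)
Your proof is correct, and it takes a genuinely different route from the paper's. The paper proves the identity by induction on $i$, the number of arguments: it verifies $i=3$ by hand with two applications of the Jacobi identity, and for the step it substitutes $\alpha_1 = [x_1,x_2]$, $\alpha_2 = x_3, \ldots, \alpha_{j+1} = x_{j+2}$ into the $i=j$ instance and then splits the resulting final term $[[x_3,\ldots,x_{j+2}]_r,[x_1,x_2]]$ with one more Jacobi identity. You instead fix $i$ and prove the stronger telescoping statement $T_1+\cdots+T_k = -[B_k, A_{i-k+1}]$ by induction on $k$, with one Jacobi identity per step applied to $(B_k, A_{i-k}, x_{i-k+1})$; I checked the base case, the step, and the termination at $k=i$, and the indexing matches the terms as written in the statement (your $T_k = [[B_{k-1},A_{i-k+1}],x_{i-k+2}]$ is exactly the $k$-th displayed term, including $k=2$ with $B_1 = x_{i+1}$ --- in fact no sign adjustment is needed there, contrary to your parenthetical remark, since the term is literally $[[B_1,A_{i-1}],x_i]$). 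Your approach buys a uniform treatment that needs no separate base computation for $i=3$ and yields the partial-sum identity $T_1+\cdots+T_k = -[B_k,A_{i-k+1}]$ as a clean intermediate statement that makes the cancellation mechanism transparent; the paper's approach buys the convenience of reusing the lemma itself as the inductive hypothesis. Both are essentially "one Jacobi identity per step," so neither is materially shorter.
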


\begin{proof}
First we prove the lemma for $i =3$. In the following Jacoby idenity 
\[[\alpha_1, \alpha_2, \alpha_3]_l + [\alpha_2, \alpha_3, \alpha_1]_l + [\alpha_3, \alpha_1, \alpha_2]_l = 0,\]
putting $\alpha_1 = [x_1, x_2], \alpha_2 = x_3, \alpha_3 = x_4$, we get
\[[[x_1, x_2], x_3, x_4]_l + [x_3, x_4, [x_1, x_2]]_l + [x_4, [x_1, x_2], x_3]_l = 0.\]
Applying Jacoby identity again with $\alpha_1 = x_1, \alpha_2 = x_2$ and $\alpha_3 = [x_3, x_4]$, we have
\[[x_1, x_2, [x_3, x_4]]_l + [x_2, [x_3, x_4], x_1]_l + [[x_3, x_4], x_1, x_2]_l = 0.\]
This, together with the previous equation, gives that
\[[[x_1, x_2], x_3, x_4]_l + [x_2, [x_3, x_4], x_1]_l + [[x_3, x_4], x_1, x_2]_l + [x_4, [x_1, x_2], x_3]_l = 0.\]
This proves the lemma for $i = 3$.

Now suppose that the lemma is true for some $i = j \geq 3$, i.e., for $\alpha_1, \alpha_2, \ldots \alpha_{j+1} \in L$ we have 
\begin{eqnarray*}
&& [[\alpha_1, \alpha_2, \cdots, \alpha_j]_l, \alpha_{j+1}] + [[\alpha_{j+1}, [\alpha_1, \alpha_2, \cdots \alpha_{j-1}]_l], \alpha_j] \\
 && + [[[\alpha_j, \alpha_{j+1}]_r, [\alpha_1, \cdots, \alpha_{j-2}]_l], \alpha_{j-1}] \\
&& + [[[\alpha_{j-1}, \alpha_j, \alpha_{j+1}]_r, [\alpha_1, \alpha_2, \cdots, \alpha_{j-3}]_l], \alpha_{j-2}] 
+ \cdots + [[\alpha_2, \cdots, \alpha_{j+1}]_r, \alpha_1] = 0. \\
\end{eqnarray*}
Putting $\alpha_1 = [x_1, x_2], \alpha_2 = x_3, \alpha_3 = x_4, \ldots, \alpha_{j+1} = x_{j+2}$ in the above equation we get 
\begin{eqnarray*}
&& [[x_1, x_2, \cdots, x_{j+1}]_l, x_{j+2}] + [[x_{j+2}, [x_1, x_2, \cdots x_{j}]_l], x_{j+1}] \\
 && + [[[x_{j+1}, x_{j+2}]_r, [x_1, \cdots, x_{j-1}]_l], x_{j}] \\
&& + [[[x_{j}, x_{j+1}, x_{j+2}]_r, [x_1, x_2, \cdots, x_{j-2}]_l], x_{j-1}] 
+ \cdots + [[x_3, \cdots, x_{j+2}]_r, [x_1, x_2]] = 0.\\
\end{eqnarray*}
Applying Jacoby identity we have
\[[[x_3, \cdots, x_{j+2}]_r, [x_1, x_2]] = [[x_2, [x_3, \cdots, x_{j+2}]_r], x_1] + [[[x_3, \cdots, x_{j+2}]_r], x_1], x_2].\]
Using this in the prevoius equation we see that the lemma is true for $j+1$. This completes the proof.
\end{proof}

\begin{corollary}\label{cor1}
Let $\lambda_i$, for $ 2 \leq i \leq c$ be as defined in Remark \ref{rmk1}. Then 
\begin{eqnarray*}
\Psi_i(x_1, x_2, \ldots, x_{i+1}) := && \overline{[x_1, x_2, \cdots, x_i]_l} \otimes \overline{x_{i+1}} + \overline{[x_{i+1}, [x_1, x_2, \cdots x_{i-1}]_l]} \otimes \overline{x_i} \\
&& +\overline{[[x_i, x_{i+1}]_r, [x_1, \cdots, x_{i-2}]_l]} \otimes \overline{x_{i-1}} \\
&& + \overline{[[x_{i-1}, x_i, x_{i+1}]_r, [x_1, x_2, \cdots, x_{i-3}]_l]} \otimes \overline{x_{i-2}} \\
&& + \cdots + \overline{[x_2, \cdots, x_{i+1}]_r} \otimes \overline{x_1} \in \ker(\lambda_i).\\
\end{eqnarray*}
\end{corollary}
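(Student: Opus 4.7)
The plan is to apply $\lambda_i$ to $\Psi_i(x_1, \ldots, x_{i+1})$ termwise, interpret the output as a sum of iterated commutators in $F$, and observe that this sum is precisely the identity furnished by Lemma \ref{lem_ellis}. Concretely, I would first fix lifts $\tilde{x}_j \in F$ of $x_j \in L$ for $1 \leq j \leq i+1$. Each first tensor factor appearing in $\Psi_i$---namely $[x_1, \ldots, x_i]_l$, $[x_{i+1}, [x_1, \ldots, x_{i-1}]_l]$, continuing down to $[x_2, \ldots, x_{i+1}]_r$---is an iterated bracket lying in $\gamma_i(L)$, whose evaluation in the $\tilde{x}_j$ lies in $\gamma_i(F)$ and so represents a well-defined class in $(\gamma_i(F)+R)/(\gamma_{i+1}(F)+R)$. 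The second tensor factors are simply the classes $\overline{x_j}$ in $L/\gamma_2(L)$. By the defining rule of $\lambda_i$ recalled in Remark \ref{rmk1}, each summand of $\Psi_i$ is then sent (up to sign, depending on the chosen order convention for the tensor) to the class of the corresponding iterated commutator in $F$ appearing on the left-hand side of Lemma \ref{lem_ellis}.

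Second, I would apply Lemma \ref{lem_ellis} itself to the Lie algebra $F$, evaluated at $\tilde{x}_1, \ldots, \tilde{x}_{i+1}$. The lemma asserts that the sum of those iterated commutators already vanishes in $F$, so it vanishes a fortiori in the quotient $[F, \gamma_i(F)+R]/[\gamma_{i+1}(F)+R, F]$. Hence $\lambda_i(\Psi_i(x_1, \ldots, x_{i+1})) = 0$, which is exactly the assertion $\Psi_i(x_1, \ldots, x_{i+1}) \in \ker(\lambda_i)$.

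There is no deep obstacle in this argument---it is essentially a bilinear transport of the Lie identity of Lemma \ref{lem_ellis} through the free presentation $F/R$. The only mild point requiring care is independence of the chosen lifts $\tilde{x}_j$: two lifts of the same $x_j$ differ by an element of $R$, and the bilinear pairing defining $\lambda_i$ factors through $F/(\gamma_2(F)+R)$ and $(\gamma_i(F)+R)/(\gamma_{i+1}(F)+R)$, so any such discrepancy is absorbed in the target quotient and the termwise evaluation is well-defined.
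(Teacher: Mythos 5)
Your proposal is correct and is essentially the argument the paper intends: the corollary is stated without proof as an immediate consequence of Lemma \ref{lem_ellis}, obtained exactly as you describe by lifting to the free Lie algebra $F$, applying the identity of the lemma there, and noting that $\lambda_i$ sends each summand of $\Psi_i$ to the corresponding iterated commutator class in $[F,\gamma_i(F)+R]/[\gamma_{i+1}(F)+R,F]$. Your remark on independence of the choice of lifts is a sensible added precision, not a departure from the paper's route.
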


\vspace{.2cm}

{\bf Proof of Theorem \ref{thm6}}: Let $F/R$ be a free presentation of $L$ and $\lambda_i$, $2 \leq i \leq c$, be as defined in Remark \ref{rmk1}. Then by Equation \ref{eq_Eshrati} we have

\[\dim(M(L)) = \dim(M(L/\gamma_c(L))) - \dim(\gamma_c(L)) + \dim\Bigg(\frac{L}{\gamma_2(L)} \otimes \gamma_c(L) \Bigg) -\dim(\ker(\lambda_c)),\]

\noindent which further gives
\begin{equation}\label{eq2}
\dim(M(L)) = \dim(M(L/\gamma_c(L)))  + \Bigg(\dim\Bigg(\frac{L}{\gamma_2(L)}\Bigg) -1 \Bigg)\dim \gamma_c(L) -\dim(\ker(\lambda_c)).
\end{equation}

Notice for $2 \leq i \leq c$ that $\frac{L}{\gamma_i(L)}$ is an $(i-1)$-step nilpotent Lie algebra and $\frac{F}{\gamma_i(F) + R}$ is a free presentation of $\frac{L}{\gamma_i(L)}$.  Therefore a repeated application of Equation \ref{eq2} for the dimension of $M(L/\gamma_i(L))$, for $2 \leq i \leq c$ gives us that

\begin{equation}\label{eq3}
\dim(M(L) = \dim(M(L/\gamma_2(L))) + \Bigg(\dim\Bigg(\frac{L}{\gamma_2(L)}\Bigg) -1 \Bigg)\dim \gamma_2(L) - \sum_{i = 2}^{c} \dim(\ker(\lambda_i)).
\end{equation}

Let $U= \{x_1, x_2, \ldots, x_{n-m}\}$ be a minimal system of generators of $L$.
Fix $i \leq \min(n-m, c)$. Since $ i \leq c$, $\gamma_i(L)/\gamma_{i+1}(L)$ is non-trivial. We can choose a commutator $[y_1,y_2, \cdots, y_i]$ of weight $i$ such that $[y_1,y_2, \cdots, y_i] \notin \gamma_{i+1}(L)$ and $y_1, \ldots, y_i \in U$ . Since $i \leq n-m$, $U \backslash \{y_1,y_2, \cdots, y_i\}$ contains at least $n-m-i$ elements. Choose any $n-m-i$ elements $z_1, z_2, \ldots, z_{n-m-i}$ from $U \backslash \{y_1,y_2, \cdots, y_i\}$. Since $[y_1,y_2, \cdots, y_i] \notin \gamma_{i+1}(L)$ and $z_j \notin \{y_1,y_2, \cdots, y_i\}$, $\Psi_i(y_1, \ldots, y_i, z_j) \neq 1$. Notice that the set $\{\Psi_i(y_1, \ldots, y_i, z_j) \ \ | \ \ 1 \leq j \leq n-m-i\}$ is a linearly independent set 
because $\{x_1,x_2, \cdots, x_{n-m}\}$ is a  minimal system of generators for $L$. It follows from Corollary \ref{cor1} that $ \dim (\ker(\lambda_i)) \geq n-m-i$. Now putting this into Equation \ref{eq3} and applying Lemma \ref{lem_lie} we get the required result.
\qed

\begin{remark}
Note that $\lambda_i$s can be defined from $\frac{L}{\gamma_2(L)Z(L)} \otimes  
\frac{\gamma_i(L)}{\gamma_{i+1}(L)}$, and therefore bound in Theorem \ref{thm6} can be further improved by subtracting the number  $\dim(\frac{Z(L)}{\gamma_2(L) \cap Z(L)})\dim(\gamma_2(L))$.
\end{remark}

\vspace{.3cm}

{\bf Acknowledgements:} I am very much thankful to Prof. Graham Ellis for the discussion of the proof of Lemma \ref{lem_ellis}. I am also grateful to my post-doctoral supervisor Prof. Boris Kunyavski\u{\i} for his encouragement and support. This research was supported by Israel Council for Higher Education's fellowship program and by ISF grant 1623/16.

\end{document}